\documentclass[12pt]{amsart}%
\usepackage{amsmath, amssymb,tabularx,graphicx}
\usepackage{cite}
\usepackage[mathscr]{eucal}
\usepackage{amsmath}
\usepackage{amsfonts}
\usepackage{amssymb}
\usepackage{graphicx}%
\setcounter{MaxMatrixCols}{30}
\newtheorem{thm}{Theorem}

\newtheorem{lemma}[thm]{Lemma}
\newtheorem{defn}[thm]{Definition}

\newtheorem{cor}[thm]{Corollary}
\newtheorem{remark}{Remark}
\newtheorem{ex}[thm]{Example}

\textwidth16cm
\textheight22cm
\oddsidemargin0cm
\evensidemargin0cm

\newcommand{\Z}{\mathscr{Z}}

\begin{document}
\title[A Comparison of Large Scale Dimension]{A Comparison of Large Scale Dimension of a Metric Space to the Dimension of its Boundary}
\author{Craig R. Guilbault and Molly A. Moran}
\thanks{Work on this project was aided by a Simons Foundation Collaboration Grant.}

\begin{abstract}
Buyalo and Lebedeva have shown that the asymptotic dimension of a hyperbolic group is equal to the dimension of the group boundary plus one. Among the work presented here is a partial extension of that result to all groups admitting $\Z$-structures; in particular, we show that $\hbox{asdim}G\geq \hbox{dim}Z+1$ where $Z$ is the $\Z$-boundary. 
\end{abstract}
\maketitle

\section{Introduction}

The primary goal of this paper is to establish a connection between the
asymptotic dimension of a group admitting a $\mathscr{Z}$-structure and the
covering dimension of the group's boundary.

For hyperbolic $G$, the relationship is strong; Buyalo and Lebedeva
\cite{BuLe07} have shown that $\hbox{asdim}G=\hbox{dim}\partial G+1$. In
\cite{ChHo12}, a partial extension to CAT(0) groups was attempted.
Specifically, it was claimed that $\hbox{asdim}G\geq\hbox{dim}\partial G+1$,
where $\partial G$ is any CAT(0) boundary of $G$. However, in MathSciNet
review MR3058238, X. Xie pointed out a critical error in the proof. Here we
recover the same inequality as a special case of a more general theorem.

\begin{thm}
\label{Theorem: dimZ+1<=asdimG} Suppose a group $G$ admits a $\mathscr{Z}$%
-structure, $(\overline{X},Z)$. Then $\dim Z+1\leq\operatorname*{asdim}G$.
\end{thm}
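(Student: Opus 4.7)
The plan is to show $\dim Z \leq n-1$, where $n := \operatorname{asdim} G$ (which we may assume finite), by producing, for each $\epsilon > 0$, an open cover of $Z$ of mesh less than $\epsilon$ in a chosen compatible metric $\rho$ on $\overline{X}$ and of multiplicity at most $n$. Since $G$ acts properly and cocompactly on the contractible ANR $X$, $\operatorname{asdim} X = n$ as well. Fix a proper $G$-invariant metric $d$ on $X$, and apply the Bell--Dranishnikov characterization: for any large $R$, there is a uniformly bounded open cover $\mathcal{U} = \mathcal{U}_0 \sqcup \cdots \sqcup \mathcal{U}_n$ of $X$ with each color family $\mathcal{U}_i$ an $R$-disjoint collection. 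A $G$-equivariant choice places the elements in finitely many $G$-orbits, yielding a uniform $d$-diameter bound and multiplicity at most $n+1$.

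The bridge from $X$ to $Z$ is provided by the nullity axiom of the $\mathscr{Z}$-structure. Each $G$-orbit of a compact subset of $X$ is null in $\overline{X}$, so for any $\delta > 0$ only finitely many elements of $\mathcal{U}$ have $\rho$-diameter at least $\delta$, and those exceptional elements sit in some compact $K \subset X$. Choosing $\delta$ small relative to $\epsilon$ and setting $\mathcal{U}^\ast := \{U \in \mathcal{U} : U \cap K = \emptyset\}$, every $U \in \mathcal{U}^\ast$ has $\rho$-diameter less than $\delta$. Using that $\overline{X}$ is an ANR, swell each $U \in \mathcal{U}^\ast$ to an open $\widetilde{U} \subset \overline{X}$ of $\rho$-diameter less than $\epsilon/2$ that preserves the nerve. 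Then $\{\widetilde{U}\}$ is an open cover of a neighborhood of $Z$ in $\overline{X}$, and its restriction to $Z$ is an open cover of mesh less than $\epsilon$---but only of multiplicity at most $n+1$ rather than the required $n$.

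The main obstacle is precisely the last step: reducing the multiplicity from the generic $n+1$ down to $n$, which accounts for the ``$+1$'' in the theorem's statement. The $R$-disjointness of each color $\mathcal{U}_i$ need \emph{not} persist at $Z$, since $d$ blows up near $Z$ while $\rho$ stays bounded, allowing distinct elements of the same color to accumulate at a common boundary point. To address this, I would invoke the $\mathscr{Z}$-set property of $Z$: there exists a homotopy $h_t : \overline{X} \to \overline{X}$ with $h_0 = \operatorname{id}$, $h_t(\overline{X}) \subset X$ for $t > 0$, and $\rho$-tracks shrinking to $0$ as $t \to 0$. Pushing $Z$ slightly into $X$ via $h_t$ yields a nearby topological copy of $Z$ inside $X$ that inherits the multiplicity-$(n+1)$ cover; the ``outward'' collar direction supplied by $h_t$ should absorb one of the $n+1$ colors, leaving at most $n$ colors contributing tangentially to the boundary. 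Making this tangential/radial split rigorous---coordinating the scale $R$, the diameter threshold $\delta$, the swelling, and the homotopy parameter $t$---is the heart of the argument and where I expect the main technical work to lie.
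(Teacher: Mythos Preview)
Your proposal correctly identifies the ingredients---a uniformly bounded small-order cover of $X$, the nullity condition to make far-out elements $\rho$-small, and the $\mathscr{Z}$-set homotopy---and you are honest that the argument is unfinished at the crucial ``reduce multiplicity from $n+1$ to $n$'' step. That step, as written, is a genuine gap. The colored $R$-disjointness does not survive passage to $Z$ (you note this yourself), and the phrase ``the outward collar direction should absorb one of the $n+1$ colors'' is an intuition, not a mechanism: there is no reason a single color family should align with the radial direction of $h_t$, nor any procedure offered for trading one color against the collar. A couple of auxiliary steps are also shakier than you suggest: the cover coming from $\operatorname{asdim}X=n$ need not be $G$-equivariant or fall into finitely many $G$-orbits, and ``swell while preserving the nerve'' does not automatically produce a family that \emph{covers} $Z$, only one whose union contains $Z$ in its closure.

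The paper's resolution is exactly the precise version of your collar intuition, but it avoids any color-splitting on $Z$. Rather than trying to cover $Z$ with order $\leq n$, one covers $Z\times[0,1]$ with order $\leq n+1$ and then invokes the classical product formula $\dim(Z\times[0,1])=\dim Z+1$. Concretely: reparametrize the $\mathscr{Z}$-set homotopy to get a map $F:Z\times[0,1]\to X$ whose image lies in $X\setminus K_0$, arranged so that each element of $\mathscr{U}'$ meets only a short subinterval of the $[0,1]$-factor; then $\{F^{-1}(U):U\in\mathscr{U}'\}$ is an open cover of $Z\times[0,1]$ of mesh $<\epsilon$ (in the $\ell_\infty$ metric) and order $\leq n+1$. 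This yields $\dim(Z\times[0,1])\leq n$, hence $\dim Z+1\leq n$. Note also that only a single uniformly bounded cover of order $\leq n+1$ is needed, so the argument actually proves the stronger inequality $\dim Z+1\leq\dim_{\mathrm{mc}}X$; the Bell--Dranishnikov colored decomposition is unnecessary. Your ``collar absorbs one dimension'' heuristic is correct---the $[0,1]$ factor \emph{is} the collar---but the absorption happens via Hurewicz's product theorem, not by separating the cover into tangential and radial parts.
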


By a $\mathscr{Z}$-structure on $G$, we are referring to the axiomatized
approach to group boundaries laid out in \cite{Be96} and expanded upon in
\cite{Dr06}. Groups known to admit $\mathscr{Z}$-structures include:
hyperbolic groups (with $X$ being a Rips complex and $Z=\partial G$)
\cite{BeMe91}; CAT(0) groups (with $X$ being the CAT(0) space and $Z$ its
visual boundary) \cite{Be96}; systolic groups \cite{OsPr09}, Baumslag-Solitar
groups \cite{GuMoTi15}; as well as various combinations of these classes, as described in
\cite{Tir11}, \cite{Da03}, and \cite{Ma14}. Definitions of $\mathscr{Z}$-structure and other
key terms used here will be provided in the next section.

Theorem \ref{Theorem: dimZ+1<=asdimG} will be obtained from a more general
observation about metric spaces.

\begin{thm}
\label{Theorem: controlled Z-compactification} Suppose a proper metric space
$(X,d)$ admits a controlled $\mathscr{Z}$-compactification $\overline{X}=X\cup
Z$. Then $\dim Z+1\leq\dim_{\emph{\text{mc}}}X$.
\end{thm}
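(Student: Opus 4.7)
Let $n=\dim_{\text{mc}} X$; the aim is to produce an open cover of $Z$ of multiplicity at most $n$, from which $\dim Z\le n-1$ follows, giving the desired inequality $\dim Z+1\le n$. The strategy is to take a cover of $X$ witnessing the macroscopic dimension bound and push it to $Z$, exploiting the controlled $\mathscr{Z}$-compactification to drop the multiplicity by exactly one when passing to the boundary.

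First I would select a cover $\mathcal{U}=\{U_\alpha\}$ of $X$ of multiplicity at most $n+1$, with uniformly $d$-bounded elements and Lebesgue-type parameter large enough relative to the control data furnished by the compactification. The hypothesis that $\overline{X}$ is a \emph{controlled} $\mathscr{Z}$-compactification should give a precise compatibility between ``$d$-bounded'' and ``small in the topology of $\overline{X}$ near $Z$'': bounded subsets of $X$ have closures in $\overline{X}$ disjoint from $Z$, while every neighborhood of a point $z\in Z$ meets $X$ in $d$-unbounded pieces. The key consequence is that every $(n+1)$-fold intersection $U_{\alpha_0}\cap\cdots\cap U_{\alpha_n}$, being bounded in $d$, has closure in $\overline{X}$ that misses $Z$ entirely. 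Next, for each $U_\alpha$ I would define a boundary trace $V_\alpha\subset Z$ consisting of the boundary points asymptotically accessed from $U_\alpha$, suitably thickened (using normality in $\overline{X}$, or by working with a sequence of progressively finer covers and passing to a limit) so that each $V_\alpha$ is open in $Z$. The collection $\{V_\alpha\}$ covers $Z$, since each $z\in Z$ is approached by a sequence in $X$ that eventually lies in some $U_\alpha$. Its multiplicity is at most $n$: if $z\in V_{\alpha_0}\cap\cdots\cap V_{\alpha_n}$, then $z$ is approached by points in the $(n+1)$-fold intersection $U_{\alpha_0}\cap\cdots\cap U_{\alpha_n}$, contradicting the fact that the closure of this bounded intersection is disjoint from $Z$.

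The principal obstacle is the thickening step: producing open sets $V_\alpha$ in $Z$ whose union covers $Z$ while preserving the drop in multiplicity from $n+1$ to $n$. To do this one must ``expand'' each bounded $U_\alpha$ to a neighborhood in $\overline{X}$ whose boundary trace on $Z$ remains controlled and, simultaneously across all $\alpha$, continues to exclude the traces of the $(n+1)$-fold intersections. I expect that this is precisely the content the ``controlled'' hypothesis is designed to support; formalizing it may require either an anti-Čech sequence of covers of increasing scale whose nerves approximate $Z$ at infinity, or an explicit thickening using a metric on $\overline{X}$ compatible with the null-family condition of the $\mathscr{Z}$-structure. Once this thickening is in hand, the multiplicity argument of the previous paragraph closes the proof.
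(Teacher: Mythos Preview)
Your core mechanism does not work. You yourself observe that every $d$-bounded subset of $X$ has closure in $\overline{X}$ disjoint from $Z$; since each $U_\alpha$ is bounded, its closure already misses $Z$, so the na\"ive ``boundary trace'' of every cover element is empty and the $V_\alpha$'s cannot cover $Z$. The sentence ``each $z\in Z$ is approached by a sequence in $X$ that eventually lies in some $U_\alpha$'' is false: any sequence tending to $z$ escapes every bounded set and therefore passes through infinitely many $U_\alpha$'s without remaining in any one of them. No ``thickening'' of individual bounded sets repairs this, because as soon as a thickened set actually touches $Z$ it is no longer $d$-bounded, and your multiplicity-drop argument collapses. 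Even granting nonempty $V_\alpha$'s, the step ``$z\in V_{\alpha_0}\cap\cdots\cap V_{\alpha_n}$ implies $z$ is approached by points of $U_{\alpha_0}\cap\cdots\cap U_{\alpha_n}$'' is an unjustified interchange of closure and intersection. Finally, macroscopic dimension hands you a single uniformly bounded cover with no control on Lebesgue number, so there is nothing to tune ``large enough relative to the control data''.

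The paper obtains the extra $+1$ by an entirely different route: rather than passing from a cover of $X$ to a cover of $Z$, it uses the $\mathscr{Z}$-set homotopy to produce, for each $\epsilon>0$, a map $F:Z\times[0,1]\to X$ whose tracks are $\epsilon$-small in the $\overline{X}$-metric, and pulls the bounded cover $\mathscr{U}$ back along $F$. A careful reparametrization in the $[0,1]$-factor (against a nested sequence of compacta in $X$) forces the preimages $F^{-1}(U)$ to have small diameter in both coordinates, so one gets an open cover of $Z\times[0,1]$ of mesh $<\epsilon$ and order $\le n+1$. This shows $\dim(Z\times[0,1])\le n$, and then the classical product formula $\dim(Z\times[0,1])=\dim Z+1$ supplies the missing $+1$. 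The point is that the drop in dimension comes from the $[0,1]$-factor via Hurewicz, not from any loss of multiplicity at the boundary.
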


Here, $\dim_{\text{mc}}$ stands for Gromov's \emph{macroscopic dimension}, a
type of large scale dimension for metric spaces that is less restrictive than
asymptotic dimension in that, for any $\left(  X,d\right)  $,
$\operatorname*{dim}_{\text{mc}}X\leq\hbox{asdim}X$. To complete the proof of
Theorem \ref{Theorem: dimZ+1<=asdimG} it will then suffice to show that, for a
$\mathscr{Z}$-structure $(\overline{X},Z)$ on a group $G$, $\overline{X}$ is a
controlled $\mathscr{Z}$-compactification and $\hbox{asdim}X=\hbox{asdim}G$.

Theorem \ref{Theorem: controlled Z-compactification} is inspired by the main
argument in \cite{GuTi13} together with the point of view presented in \cite{Mo14}.

\section{Background and Definitions}

We begin by providing a few definitions and results for the different
dimension theories and then we discuss controlled $\mathscr{Z}$%
-compactifications and $\mathscr{Z}$-structures.

Given a cover $\mathscr{U}$ of a metric space $X$, mesh$(\mathscr{U})=\sup
\{\text{diam}(U)|U\in\mathscr{U}\}$. The cover is \textbf{uniformly bounded}
if there exists some $D>0$ such that mesh$(\mathscr{U})\leq D$. The
\textbf{order} of $\mathscr{U}$ is the smallest integer $n$ for which each
element $x\in X$ is contained in at most $n$ elements of $\mathscr{U}$.

\begin{defn}
The \textbf{covering dimension} of a space $X$ is the minimal integer $n$ such
that every open cover of $X$ has an open refinement of order at most $n+1$.
\end{defn}

There are various ways to show that a space has finite covering dimension.
When working with compact metric spaces, we prefer the following.

\begin{lemma}
For a compact metric space $X$, $\dim X\leq n$ if and only if, for every
$\epsilon>0$, there is an open cover $\mathscr{U}$ of $X$ with \emph{mesh}%
$\mathscr{U}<\epsilon$ and \emph{order}$\mathscr{U}\leq n+1$.
\end{lemma}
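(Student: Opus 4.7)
The statement is a classical characterization of covering dimension for compact metric spaces, and my plan is to prove both directions directly from the definition of covering dimension, using compactness only via the Lebesgue number lemma in the reverse direction.

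For the forward implication, suppose $\dim X\leq n$ and let $\epsilon>0$. I would start from the open cover $\mathscr{B}=\{B(x,\epsilon/3)\mid x\in X\}$ of $X$ by open balls of radius $\epsilon/3$. By the definition of covering dimension, $\mathscr{B}$ admits an open refinement $\mathscr{U}$ of order at most $n+1$. Since each $U\in\mathscr{U}$ is contained in some ball $B(x,\epsilon/3)$, its diameter is at most $2\epsilon/3<\epsilon$, so $\text{mesh}(\mathscr{U})<\epsilon$ as required. Note that compactness is not actually needed here; it will enter only in the converse.

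For the reverse implication, suppose that for every $\epsilon>0$ there exists an open cover of $X$ of mesh less than $\epsilon$ and order at most $n+1$. Let $\mathscr{V}$ be an arbitrary open cover of $X$. Since $X$ is compact, the Lebesgue number lemma furnishes a $\lambda>0$ such that every subset of $X$ of diameter less than $\lambda$ lies in some element of $\mathscr{V}$. Applying the hypothesis with $\epsilon=\lambda$, I obtain an open cover $\mathscr{U}$ of $X$ with $\text{mesh}(\mathscr{U})<\lambda$ and $\text{order}(\mathscr{U})\leq n+1$. Each $U\in\mathscr{U}$ then satisfies $\text{diam}(U)<\lambda$, so there is a $V(U)\in\mathscr{V}$ containing $U$; hence $\mathscr{U}$ is an open refinement of $\mathscr{V}$ of order at most $n+1$. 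Since $\mathscr{V}$ was arbitrary, $\dim X\leq n$.

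Neither direction is really an obstacle here; the only subtlety is making sure the passage from ``mesh $<\epsilon$'' to ``refinement'' works cleanly, which is exactly what Lebesgue numbers provide, and conversely making sure that the refinement produced by the definition of covering dimension has small enough mesh, which is handled by refining a cover by balls of radius $\epsilon/3$ rather than $\epsilon/2$ to ensure strict inequality.
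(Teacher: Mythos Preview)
Your proof is correct and is the standard argument for this classical fact. The paper itself does not prove this lemma; it simply states it as a preferred working characterization of covering dimension for compact metric spaces, leaving the proof to the reader or to standard references. Your forward direction (refining a cover by $\epsilon/3$-balls) and reverse direction (via the Lebesgue number lemma) are exactly the expected arguments, and your remark that compactness is needed only for the converse is accurate.
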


Covering dimension can be thought of as a small-scale property. Gromov
introduced asymptotic dimension as a large scale analog of covering dimension
\cite{Gr93}.

\begin{defn}
The \textbf{asymptotic dimension} of a metric space $X$ is the minimal integer
$n$ such that for every uniformly bounded open cover $\mathscr{V}$ of $X$,
there is a uniformly bounded open cover $\mathscr{U}$ of $X$ with
\emph{order}$(\mathscr{U})\leq n+1$ so that $\mathscr{V}$ refines
$\mathscr{U}$. In this case, we write $\operatorname*{asdim}X=n$.
\end{defn}

For a nice survey of asymptotic dimension, see \cite{BDr07}. Although Theorem
\ref{Theorem: dimZ+1<=asdimG} is stated for asymptotic dimension, we will
prove a stronger result using a weaker notion of large scale dimension known
as \emph{(Gromov) macroscopic dimension}.

\begin{defn}
The \textbf{Gromov macroscopic dimension} of a metric space $X$ is the minimal
integer $n$ such that there exists a uniformly bounded open cover of $X$ with
order at most $n+1$. In this case, we write $\dim_{\emph{\text{mc}}}X=n$.
\end{defn}

Clearly $\dim_{\text{mc}}X\leq\operatorname*{asdim}X$ for every metric space
$X$.

As noted in the introduction, Theorem \ref{Theorem: dimZ+1<=asdimG} about
groups and their boundaries will be deduced from a broader observation about
certain $\mathscr{Z}$-compactifications of metric spaces. Recall that a closed
subset, $A$, of an ANR, $Y$, is a \textbf{\emph{\boldmath$\mathscr{Z}$-set}}
if there exists a homotopy $H:Y\times\lbrack0,1]\rightarrow X$ such that
$H_{0}=\operatorname*{id}_{Y}$ and $H_{t}(X)\subset Y-A$ for every $t>0$.

\begin{defn}
A \textbf{controlled \boldmath$\mathscr{Z}$-compactification} of a proper
metric space $X$ is a compactification $\overline{X}=X\cup Z$ satisfying the
following two conditions:

\begin{itemize}
\item $Z$ is a $\mathscr{Z}$-set in $\overline{X}$

\item For every $\epsilon>0$ and every $R>0$, there exists a compact set
$K\subset X$ so that every ball of radius $R$ in $X$ not intersecting $K$ has
diameter less than $\epsilon$ in $\overline{X}$.
\end{itemize}

In this case, $Z$ is called a \textbf{\boldmath$\mathscr{Z}$-boundary}, or
simply a \textbf{boundary} for $X$.
\end{defn}

There are a few things to take note of in the above definition. First, we have
followed tradition and defined $\mathscr{Z}$-sets in ANRs; hence the
compactification $\overline{X}$ must be an ANR. Furthermore, since open
subsets of ANRs are also ANRs, $X$ must be an ANR to be a candidate for a
controlled $\mathscr{Z}$-compactification\footnote{See Remark
\ref{Remark: generalized Z-sets}.}. Secondly, it is important to distinguish
between the (proper) metric $d$ on $X$ and the metric $\overline{d}$ on
$\overline{X}$. The second condition, which we call the \emph{control
condition}, says balls of radius $R$ in $(X,d)$ get arbitrarily small near the
boundary, when viewed in $(\overline{X},\overline{d})$. The metric $d$ is
crucial; it is given in advance and determines the geometry of $X$. For our
purposes the metric on $\overline{X}$ is arbitrary; any $\overline{d}$
determining the appropriate topology can be used.

\begin{ex}
The addition of the visual boundary to a proper CAT(0) space is a prototypical
example in Geometric Group Theory of a controlled $\mathscr{Z}$-compactification.
\end{ex}

In the presence of nice group actions, controlled $Z$-compactifications arise
rather naturally. As a result, our discussion can be extended to asymptotic
dimension of groups and covering dimension of group boundaries. The following
definition is key.

\begin{defn}
A \textbf{\boldmath$\mathscr{Z}$-structure on a group \boldmath$G$} is a pair
of spaces $(\overline{X},Z)$ satisfying the following four conditions:

\begin{enumerate}
\item $\overline{X}$ is a compact AR,

\item $Z$ is a $\mathscr{Z}$-set in $\overline{X}$,

\item $X=\overline{X}-Z$ is a proper metric space on which $G$ acts properly,
cocompactly, by isometries, and

\item $\overline{X}$ satisfies a \emph{nullity condition} with respect to the
action of $G$ on $X$: for every compact $C\subseteq X$ and any open cover
$\mathscr{U}$ of $\overline{X}$, all but finitely many $G$ translates of $C$
lie in an element of $\mathscr{U}$.
\end{enumerate}
\end{defn}

\begin{remark}
\emph{This definition of} $\mathscr{Z}$\emph{-structure is due to
Dranishnikov \cite{Dr06}. It generalizes Bestvina's original definition from
\cite{Be96} by allowing }$\overline{X}$\emph{ to be infinite-dimensional and
}$G$\emph{ to have torsion. We have added an explicit requirement that the
metric on }$X$\emph{ be proper; a quick review of \cite{Dr06} reveals that
this requirement was assumed there as well.}
\end{remark}

\section{Proofs}

We begin with a proof of Theorem \ref{Theorem: controlled Z-compactification},
as the other results will be obtained from it. A key ingredient is the
following classical fact about covering dimension.

\begin{lemma}
\cite{Hu35} For any nonempty locally compact metric \ space $X$, $\dim
(X\times\lbrack0,1])=$\newline$\dim X+1$.
\end{lemma}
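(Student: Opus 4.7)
The plan is to split the equality into two separate inequalities, $\dim(X \times [0,1]) \leq \dim X + 1$ and $\dim(X \times [0,1]) \geq \dim X + 1$, and establish each in turn. The first is the easy direction: it is a special case of the classical product inequality. The second is the substantive direction; for it I would combine a reduction to the compact case with the Alexandroff--Hurewicz characterization of covering dimension in terms of essential maps to spheres.

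For the upper bound, invoke the product theorem $\dim(X \times Y) \leq \dim X + \dim Y$, which holds for sufficiently nice spaces (e.g.\ locally compact separable metric). Specializing to $Y = [0,1]$ gives the inequality immediately, since $\dim[0,1] = 1$. The proof of the product theorem itself is standard: one combines an order-$(\dim X + 1)$ cover of $X$ with the order-$2$ cover of $[0,1]$ by short open intervals, takes products to form rectangles, and then perturbs by a staircase shift to knock the order down to $\dim X + 2$.

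For the lower bound, I would first reduce to the case of compact $X$. Because $X$ is locally compact metric (and, in the cases of interest here, $\sigma$-compact), one has $\dim X = \sup\{\dim K : K \subseteq X \text{ compact}\}$ via a countable closed sum theorem, and for each compact $K$ the product $K \times [0,1]$ is a closed subspace of $X \times [0,1]$, so it suffices to prove the inequality for compact $X$. Assume then that $X$ is compact with $\dim X = n < \infty$. By Alexandroff's theorem, there exists an essential (i.e.\ non-nullhomotopic) map $f : X \to S^n$. Form the unreduced suspension $\Sigma X$, identify $\Sigma S^n$ with $S^{n+1}$, and let $q : X \times [0,1] \to \Sigma X$ be the quotient map collapsing the two ends. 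The composite $\Sigma f \circ q : X \times [0,1] \to S^{n+1}$ can then be shown to be essential, and a second application of Alexandroff's theorem forces $\dim(X \times [0,1]) \geq n + 1$.

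The main obstacle is verifying that $\Sigma f \circ q$ really is essential: one must argue that the quotient step does not kill the cohomology class represented by $\Sigma f$, and that suspension itself preserves essentiality, which is where the suspension isomorphism on reduced \v{C}ech cohomology enters. Combined with the countable sum reduction from the locally compact to the compact setting, this is the heart of the matter. Because the lemma is attributed to Hu and lies squarely in the classical dimension-theoretic canon, the paper rightly elects to cite the result rather than reproduce the above argument in detail.
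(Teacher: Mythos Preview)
The paper does not prove this lemma; it is stated as a classical fact and cited to Hurewicz (the key \texttt{Hu35} abbreviates Hurewicz's 1935 paper, not an author named Hu). Your closing sentence already recognizes this, so at the level of what the paper actually does, your proposal is in agreement.

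That said, the sketch you give for the lower bound has a real gap. The version of Alexandroff's theorem you invoke---that $\dim X = n$ forces a non-nullhomotopic map $f:X\to S^{n}$---is false: take $X=[0,1]$, which is one-dimensional yet contractible, so every map $[0,1]\to S^{1}$ is nullhomotopic. The correct characterization is that $\dim X\geq n$ if and only if there is an essential map $X\to B^{n}$, equivalently a map from some closed $A\subseteq X$ to $S^{n-1}$ that does not extend over $X$. Moreover, even if your $f$ existed, the composite $\Sigma f\circ q:X\times[0,1]\to S^{n+1}$ is \emph{always} nullhomotopic: $X\times[0,1]$ deformation retracts onto $X\times\{0\}$, which $q$ collapses to a cone point, so the whole map is homotopic to a constant. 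The classical argument (e.g.\ Hurewicz--Wallman) instead starts from a non-extendable $g:A\to S^{n-1}$, builds a map on the closed subset $\bigl(A\times[0,1]\bigr)\cup\bigl(X\times\{0,1\}\bigr)$ of $X\times[0,1]$ into $S^{n}\cong\Sigma S^{n-1}$, and shows that \emph{this} map admits no extension to $X\times[0,1]$.
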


\begin{proof}
[Proof of Theorem \ref{Theorem: controlled Z-compactification} ]Suppose $X$
admits a controlled $\mathscr{Z}$-compactification, $\overline{X}= X\cup Z$,
and let $\epsilon>0$. Assume that $\operatorname*{dim_{mc}}X=n$ and let
$\mathscr{U}$ of $X$ be a uniformly bounded open cover with
order$(\mathscr{U})\leq n +1$.

Using the control condition, we may choose a compact set $K_{0}$ such that
diam$_{\overline{d}}U\leq\frac{\epsilon}{3}$ for every $U\in\mathscr{U}$ with
the property that $U\cap K_{0}=\emptyset$. Let $\mathscr{U}^{\prime}%
=\{U\in\mathscr{U}|U\cap K_{0}=\emptyset\}$.

Since $Z$ is a $\mathscr{Z}$-set, there is a homotopy $J:\overline{X}%
\times\lbrack0,1]\rightarrow\overline{X}$ such that $J_{0}=\operatorname*{id}%
{}_{\overline{X}}$ and $J_{t}(\overline{X})\cap Z=\emptyset$ for all $t>0$. By
compactness there is some $T>0$ such that $\overline{d}(z,J_{t}(z))<\frac
{\epsilon}{3}$ for all $z\in Z$ and $t\in\lbrack0,T]$. Furthermore, we may
choose $T^{\prime}>0$ so that $J(Z\times(0,T^{\prime}])\subset\bigcup
_{U\in\mathscr{U}^{\prime}}U$. Set $t_{0}=\hbox{min}\{T,T^{\prime}\}$.

Define $H:\overline{X}\times\lbrack0,1]\rightarrow\overline{X}$ by setting
$H(x,t)=J(x,t_{0}\cdot t)$. Restrict $H$ to $Z\times\lbrack0,1]$. We will
reparametrize $H:Z\times\lbrack0,1]\rightarrow\overline{X}$ in a manner
similar to \cite{GuTi13}, so that pre-images of the open sets in
$\mathscr{U}^{\prime}$ have small mesh. After one additional adjustment, those
pre-images will form the desired cover of $Z\times\lbrack0,1]$. For
convenience we will use the $\ell_{\infty}$ metric on $Z\times\lbrack0,1]$,
$d_{\infty}=\hbox{max}\{\overline{d},|\cdot|\}$, where $|\cdot|$ is the
standard metric on $[0,1]$.

Pick $n\in\mathbb{Z}^{+}$ so that $\frac{3}{n}<\frac{\epsilon}{3}$. Choose
$t_{1}>t_{2}>\cdots>t_{n+1}\in\left[  0,1\right]  $ and compact sets
$K_{1},K_{2},...K_{n+1}\subset X$ as follows:

\begin{itemize}
\item let $t_{1}=1$ and choose $K_{1}$ so that $H(Z\times\{1\})\subset K_{1}$

\item for $i=2,3,...,n$, choose $t_{i}$ so that $H(Z\times\lbrack0,t_{i}])\cap
K_{i-1}=\emptyset$ and $K_{i}\subset X$ so that $H(Z\times\lbrack
t_{i},1])\cup K_{i-1}\subset K_{i}$ and $K_{i}$ contains all elements of
$\mathscr{U}^{\prime}$ that intersect $K_{i-1}$. (By properness, elements of
$\mathscr{U}^{\prime}$ have compact closures in $X$.)

\item let $t_{n+1}=0$ and $K_{n+1}=\overline{X}$.
\end{itemize}

Let $\lambda:[0,1]\rightarrow\lbrack0,1]$ be piecewise linear with
$\lambda(0)=0\,,\,\lambda(1)=1\,,\,\hbox{ and }\lambda\left(  \frac{i}%
{n}\right)  =t_{n-i+1}$. Reparametrize $H$ using $\lambda$ and then push
$Z\times\lbrack0,1]$ completely into $X$ by using the map $F:Z\times
\lbrack0,1]\rightarrow X$ defined by $F(z,s)=H(z,\lambda(s))$ for $s\in\left[
\frac{1}{n},1\right]  $ and $F(z,s)=H\left(  z,\frac{1}{n}\right)  $ for
$s\in\left[  0,\frac{1}{n}\right]  $.

We show that $\mathscr{V}=\{F^{-1}(U)|U\in\mathscr{U}^{\prime}\} $ is an open
cover of $Z\times[0,1]$ with mesh at most $\epsilon$ and order at most $n+1$.

Let $(z,s), (z^{\prime},s^{\prime})\in F^{-1}(U)$ and set $y=F(z,s),
y^{\prime}=F(z^{\prime},s^{\prime})$ and $t=\lambda(s), t^{\prime}%
=\lambda(s^{\prime})$. Choose $j\in\{1,2,...,n+1\}$ such that $y\in
K_{j}-K_{j-1}$. By the choice of $K_{i}$ and $t_{i}^{\prime}s$ above,
$t_{j+1}<t< t_{j-1}$. Thus, $\frac{n-j}{n}<s<\frac{2+n-j}{n}$. Since
$y,y^{\prime}\in U$ and $y\in K_{j}$, then $U\cap K_{j}=\emptyset$, so
$y^{\prime}\in K_{j+1}$. Furthermore, $y^{\prime}\notin K_{j-2}$ because if it
were, $U\cap K_{j-2}\neq\emptyset$ and $U\subset K_{j-1}$, a contradiction to
the choice of $j$. Thus, $y^{\prime}\in K_{j+1}-K_{j-2}$. Similar reasoning as
above for $t$ shows that $t_{j+2}<t^{\prime}<t_{j-2}$ and $\frac{n-1-j}%
{n}<s^{\prime}<\frac{n+3-j}{n}$. Thus,
\[
|s-s^{\prime}|<\frac{n+3-j}{n}-\frac{n-j}{n}=\frac{3}{n}<\epsilon
\]

Moreover,

\[
\overline{d}(z,z^{\prime})\leq\overline{d}(z,y)+\overline{d}(y,y^{\prime
})+\overline{d}(y^{\prime},z^{\prime})
\]
\[
=\overline{d}(z,H(z,\lambda(s)))+\overline{d}(y,y^{\prime})+\overline
{d}(z^{\prime}H(z^{\prime},\lambda(s^{\prime})))
\]
\[
<\frac{\epsilon}{3}+\frac{\epsilon}{3}+\frac{\epsilon}{3}=\epsilon
\]

By the above $d_{\infty}((z,s),(z^{\prime},s^{\prime}))<\epsilon$, proving
mesh$_{d_{\infty}}\mathscr{V}<\epsilon$. Since $\mathscr{V}$ consists of the
pre-images of $\mathscr{U}^{\prime}$ and order$\mathscr{U}^{\prime}\leq n+1$,
then order$\mathscr{V}\leq n+1$. Using the definition of dimension in Lemma 4
we have dim$(Z\times[0,1])\leq n$ and an application of Lemma 10 finishes the claim.
\end{proof}

\begin{remark}
\label{Remark: generalized Z-sets}
\emph{We have chosen to follow the traditional definition of} $\Z$-\emph{sets and require} $\overline{X}$ \emph{to be an ANR. However, the above proof also applies to more general metric spaces. In particular, we make no use of the ANR properties of} $\overline{X}$ \emph{or} $X$;  \emph{if }$Z$ \emph{is a closed subset of any compact metric space} $\overline{X}$ \emph{and it is possible to instantly homotope }$\overline{X}$\emph{ off of} $Z$,\emph{ then the proof of Theorem 2 will go through as above.} \end{remark}

From Theorem \ref{Theorem: controlled Z-compactification} we obtain a correct
proof of the main assertion of \cite[Cor.1.2]{ChHo12}, which does not involve groups.

\begin{cor}
If $X$ is a proper CAT(0) space, then $\operatorname*{asdim}X\geq\dim\partial
X+1$.

\end{cor}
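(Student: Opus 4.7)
The plan is to deduce this corollary as an immediate combination of Theorem~\ref{Theorem: controlled Z-compactification} with the trivial inequality $\dim_{\text{mc}}X\leq\operatorname{asdim}X$, once we know that the visual compactification $\overline{X}=X\cup\partial X$ of a proper CAT(0) space qualifies as a controlled $\mathscr{Z}$-compactification. The paper has already flagged this in the example preceding the corollary, so the bulk of the work is really just unpacking what that example asserts.

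Concretely, I would proceed in three steps. First, I would recall that for a proper CAT(0) space $X$ the visual bordification $\overline{X}=X\cup\partial X$ (with the cone topology) is a compact AR, with $X$ a convex open subset and $\partial X$ closed. Second, I would verify the $\mathscr{Z}$-set condition: fixing a basepoint $x_{0}\in X$, for each point $\overline{x}\in\overline{X}$ there is a unique geodesic ray or segment from $x_{0}$ to $\overline{x}$, and the map that slides each $\overline{x}$ a parameter distance $t$ back toward $x_{0}$ along that ray defines a homotopy $H:\overline{X}\times[0,1]\to\overline{X}$ with $H_{0}=\operatorname{id}$ and $H_{t}(\overline{X})\subset X$ for all $t>0$; continuity follows from the standard fact that the geodesic from $x_{0}$ to $\overline{x}$ varies continuously in $\overline{x}$ on a proper CAT(0) space. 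Third, I would verify the control condition: by CAT(0) comparison, two geodesic rays issuing from $x_{0}$ with close ideal endpoints stay close on arbitrarily long initial segments; consequently, given $\epsilon>0$ and $R>0$, one may choose $K=\overline{B}(x_{0},S)$ with $S$ large enough that any metric $R$-ball in $X$ disjoint from $K$ has $\overline{d}$-diameter less than $\epsilon$ in the visual compactification.

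Having established that $\overline{X}=X\cup\partial X$ is a controlled $\mathscr{Z}$-compactification, Theorem~\ref{Theorem: controlled Z-compactification} yields $\dim\partial X+1\leq\dim_{\text{mc}}X$, and combining with $\dim_{\text{mc}}X\leq\operatorname{asdim}X$ gives the desired inequality $\operatorname{asdim}X\geq\dim\partial X+1$.

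The main obstacle, if any, is the control condition; everything else is essentially classical. For proper CAT(0) spaces it is standard but it is the only place where the CAT(0) geometry is really used, so if the excerpt did not grant this as part of the quoted example I would spend most of the proof on the comparison-triangle estimate described above. Since the example does grant it, the proof collapses to a one-line application of Theorem~\ref{Theorem: controlled Z-compactification}.
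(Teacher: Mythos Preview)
Your proposal is correct and matches the paper's approach exactly: the paper states this corollary without proof, deriving it immediately from Theorem~\ref{Theorem: controlled Z-compactification} together with the earlier example asserting that the visual compactification of a proper CAT(0) space is a controlled $\mathscr{Z}$-compactification. If anything, you supply more detail than the paper does, since you sketch why the visual compactification satisfies both the $\mathscr{Z}$-set and control conditions, whereas the paper simply takes this as given.
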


To obtain Theorem \ref{Theorem: dimZ+1<=asdimG}, we first must show that the
notion of controlled $\mathscr{Z}$-compactification applies to a
$\mathscr{Z}$-structure $\left(  \overline{X},Z\right)  $ on a group $G$.
Since $Z\subseteq\overline{X}$ is a $\mathscr{Z}$-set, all that remains to
show is that open balls in $X$ become small near the boundary. The cocompact
action by isometries combined with the nullity condition will grant that control.

\begin{lemma}
Suppose a group $G$ admits a $\mathscr{Z}$-structure, $(\overline{X},Z)$. Then
$\overline{X}$ is a controlled $\mathscr{Z}$-compactification of
$X=\overline{X}-Z$.
\end{lemma}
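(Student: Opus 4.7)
The plan is to verify directly the two clauses in the definition of controlled $\mathscr{Z}$-compactification. Clause (1), that $Z$ is a $\mathscr{Z}$-set in $\overline{X}$, is built into the definition of a $\mathscr{Z}$-structure, so the real work is the control condition. The available tools are properness of $X$, cocompactness and isometric nature of the $G$-action, and the nullity axiom; the nullity axiom is what will eventually force the $R$-balls to shrink.

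Fix $\epsilon>0$ and $R>0$. First, I would use cocompactness of the $G$-action to pick a compact $C_{0}\subseteq X$ with $G\cdot C_{0}=X$. Because $(X,d)$ is proper, the closed $R$-neighborhood $C:=\overline{N_{R}(C_{0})}$ is still compact in $X$. Then, using compactness of $\overline{X}$, I would choose a finite open cover $\mathscr{U}$ of $\overline{X}$ with $\text{mesh}_{\overline{d}}\mathscr{U}<\epsilon$. Applying the nullity condition to $C$ and $\mathscr{U}$, only finitely many $G$-translates of $C$ fail to be contained in some member of $\mathscr{U}$; list the exceptional translates as $g_{1}C,\ldots,g_{N}C$ and set
\[
K\;:=\;\bigcup_{i=1}^{N}g_{i}C,
\]
which is a compact subset of $X$.

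To finish, consider any $d$-ball $B_{d}(y,R)$ with $B_{d}(y,R)\cap K=\emptyset$. Cocompactness furnishes $g\in G$ with $g^{-1}y\in C_{0}$, and since $G$ acts by isometries,
\[
B_{d}(y,R)\;=\;g\cdot B_{d}(g^{-1}y,R)\;\subseteq\;gC.
\]
Disjointness from $K$ forces $g\notin\{g_{1},\ldots,g_{N}\}$, so $gC\subseteq U$ for some $U\in\mathscr{U}$ and hence $\text{diam}_{\overline{d}}B_{d}(y,R)\leq\text{diam}_{\overline{d}}U<\epsilon$, which is the control inequality.

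The only real subtlety lies in the choice of $C$: one must enlarge the cocompact ``fundamental domain'' $C_{0}$ by the radius $R$ so that every $R$-ball in $X$ is contained in a single $G$-translate of $C$, and then invoke properness to keep $C$ compact. Once that observation is in place, the nullity axiom does the heavy lifting, guaranteeing that all but finitely many of those translates — and hence all but finitely many $R$-balls — have small diameter in $\overline{d}$.
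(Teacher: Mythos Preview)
Your proof is correct and follows the same strategy as the paper: enlarge a cocompact fundamental domain by $R$ (so that every $R$-ball sits inside a single translate), apply the nullity axiom to this enlarged compact set, and build $K$ from the finitely many exceptional translates. The paper's packaging differs only cosmetically---it encloses the fundamental domain in a ball $B(x_{0},d+R)$ and defines $K$ as a $(2d+R)$-neighborhood of the exceptional set rather than as the union of bad translates---but the mechanism is identical.
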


\begin{proof}

Let $\epsilon>0$ and $R>0$. By cocompactness, there is a compact set $C\subset
X$ such that $X\subset GC$. Choose $d>0$ and $x_{0}\in X$ such that $C\subset
B(x_{0},d)$. By the nullity condition, there is a compact set $K^{\prime
}\subset X$ such that whenever $gB(x_{0},d+R)\cap K^{\prime}=\emptyset$ for
some $g\in G$, then $\hbox{diam}_{\overline{d}}gB(x_{0},d+R)<\epsilon$. Let
$K=\overline{N_{2d+R}(K^{\prime})}$ be the closed $2d+R$ neighborhood of
$K^{\prime}$ in $X$. We show this is the desired compact set. Thus, let
$B(x,R)\subset X$ for some $x\in X$ with $B(x,R)\cap K=\emptyset$. Choose
$g\in G$ such that $gx\in C$. Then, $B(x,R)\subset g^{-1}B(x_{0},d+R)$ since
for any $y\in B(x,R)$,
\[
d(y,g^{-1}x_{0})\leq d(y,x)+d(g^{-1}x,x_{0})<R+d
\]

Furthermore, $g^{-1}B(x_{0},d+R)\cap K^{\prime}=\emptyset$. Otherwise, there
would be some \newline$z\in g^{-1}B(x_{0},d+R)\cap K^{\prime}$ and $d(x,z)\leq
d(x,g^{-1}x_{0})+d(g^{-1}x_{0})<2d+R$. However, $B(x,R)\cap K=\emptyset$, so,
$d(x, K^{\prime})>2d+R$. Because $z\in K^{\prime}$, we obtain the required contradiction.

Thus $\hbox{diam}_{\overline{d}}g^{-1}B(x_{0},d+R)<\epsilon$. $B(x,R)$, being
a subset of $g^{-1}B(x_{0},d+R)$, will also have diameter smaller than
$\epsilon$.
\end{proof}

\begin{proof}
[Proof of Theorem \ref{Theorem: dimZ+1<=asdimG}]Suppose a group $G$ admits a
$\mathscr{Z}$-structure $(\overline{X},Z)$. By Lemma 12, $\overline{X}$ is a
controlled $\mathscr{Z}$-compactification of $X$. Thus, by Theorem
\ref{Theorem: controlled Z-compactification}, $\hbox{asdim}X\geq
\hbox{dim}Z+1$. Since $G$ acts geometrically on $X$, $G$ is coarsely
equivalent to $X$ (see Corollary 0.9 in \cite{BDM07}). Moreover, by
\cite{Ro03}, asymptotic dimension is a coarse invariant; so
$\hbox{asdim}X=\hbox{asdim}G$.
\end{proof}

\bibliographystyle{plain}
\bibliography{Biblio}
{}

\end{document}